%
\input ./style/arxiv-vmsta.cfg
\documentclass[numbers,compress,v1.0.1]{vmsta}

\volume{3}
\issue{2}
\pubyear{2016}
\firstpage{133}
\lastpage{144}
\doi{10.15559/16-VMSTA56}

\setlength\printhistorymargin{10pt}

\startlocaldefs

\newcommand{\rrvert}{\vert}
\newcommand{\llvert}{\vert}
\urlstyle{rm}
\allowdisplaybreaks

\newtheorem{theo}{Theorem}
\newtheorem{lmm}[theo]{Lemma}
\newtheorem{tv}[theo]{Proposition}

\theoremstyle{definition}

\newtheorem{remark}[theo]{Remark}

\newcommand{\R}{\mathbb{R}}
\newcommand{\Expect}{\mathsf{E}}
\newcommand{\Prob}{\mathsf{P}}
\newcommand{\Real}{\operatorname{Re}}
\newcommand{\sign}{\operatorname{sign}}

\endlocaldefs

\begin{document}

\begin{frontmatter}
\title{Approximations for a solution to stochastic heat equation with
stable noise}

\author{\inits{L.}\fnm{Larysa}\snm{Pryhara}}\email{pruhara7@gmail.com}
\author{\inits{G.}\fnm{Georgiy}\snm{Shevchenko}\corref{cor1}}\email
{zhora@univ.kiev.ua}
\cortext[cor1]{Corresponding author.}
\address{Mechanics and Mathematics Faculty,
Taras Shevchenko National University of Kyiv,
Volodymyrska 64/11,
01601 Kyiv, Ukraine}

\markboth{L. Pryhara, G. Shevchenko}{Approximations for stochastic heat
equation with stable noise}

\begin{abstract}
We consider a Cauchy problem for stochastic heat equation
driven by a real harmonizable fractional stable process $Z$ with Hurst
parameter $H>1/2$ and stability index $\alpha>1$. It is shown that the
approximations for its solution, which are defined by truncating the
LePage series for $Z$, converge to the solution.
\end{abstract}

\begin{keyword} Heat equation \sep real harmonizable fractional stable
process \sep LePage series \sep stable random measure \sep general
stochastic measure
\MSC[2010] 60H15 \sep60G22 \sep60G52
\end{keyword}

\received{25 April 2016}
\revised{15 June 2016}
\accepted{15 June 2016}
\publishedonline{30 June 2016}
\end{frontmatter}
\section{Introduction}

Partial differential equations with randomness are widely used to model
physical, chemical, biological phenomena, financial asset prices,
economical processes, etc. The popularity of such models is due to the
combination of deterministic and stochastic features among their
characteristics.

The majority of existing literature is devoted to the case where the
random noise has some Gaussian or sub-Gaussian distribution. To mention
only few papers, a heat equation with Gaussian noise was considered in
\cite{balan-kim,balan-tudor-heat,nualarte-quer,walsh}, and a wave
equation in \cite{balan-tudor-wave,quer-tindel,walsh}. Equations with
sub-Gaussian measures were intensively studied in \cite
{koz-slyvka,koz-veresh}. The articles \cite{radch1,radch2} consider
equations with general stochastic measures.

The research carried out in the cited articles does not allow one to
consider phenomena where the randomness has a heavy-tailed
distribution. But heavy tails are ubiquitous when modeling extreme
risks, so it is quite important to consider equations with heavy-tailed noise.

The main object of this article is a stochastic heat equation in which
the source of randomness is a real harmonizable fractional stable
process $Z$. The solution is understood in the mild sense, with the
integral defined pathwise as a~fractional integral \cite{zahle}.

We consider approximations for the solution of this equation, which are
obtained by truncating the LePage representation series of $Z$. The
main result of this paper is that such representations converge to the
true solution.

The paper is organized as follows. Section~2 contains basic facts about
stable random variables and related processes. It also establishes an
auxiliary analytical lemma. In Section~3, we formulate and prove the
main result of this article.

\section{Preliminaries}

\subsection{Stable random variables and related processes}

In this paper, we consider only symmetric $\alpha$-stable $(S \alpha
S)$ random variables with $\alpha\in(1,2)$. We further provide basic
information about such variables and related objects; for a more
detailed exposition, we refer the reader to \cite{samor-taqqu}.

A random variable $\xi$ is symmetric $\alpha$-stable (S$\alpha$S) with
scale parameter $\sigma^{\alpha}, \sigma\ge0$, if it has the
characteristic function
\[
\Expect \bigl[e^{i\lambda\xi} \bigr]=e^{-|\sigma\lambda|^{\alpha}}.
\]
Given some linear space of S$\alpha$S random variables, the scale
parameter is a quasi-norm on this space, denoted $\|\cdot\|_\alpha$.

To construct families of stable random variables, in particular, stable
random processes, one frequently uses some stable random measures. We
will be interested in the so-called complex rotationally invariant
S$\alpha$S measure $\mu$ on~$\R$. By definition this is a
complex-valued random measure on $\mathcal B(\R)$ with the following properties:
\begin{enumerate}
\item for any Borel set $A \in\mathcal B(\R)$, the random variable
$\Real\mu(A)$ is S$\alpha$S with scale parameter equal to $\lambda
(A)$, the Lebesgue measure of $A$;
\item for any $A \in\mathcal B(\R)$, the random variable $\mu(A)$ is
rotationally invariant, that~is, for any $\theta\in\R$, the
distribution of $e^{i\theta}\mu(A)$ coincides with that of $\mu(A)$;
\item for any disjoint sets $A_1, \ldots,A_n \in\mathcal B(\R)$, the
random variables $\mu(A_1),\dots$, $\mu(A_n)$ are independent.
\end{enumerate}

For a function $f: \R\rightarrow\mathbb{C} $ with
\[
\|f\|_{L^\alpha(\R)}^{\alpha}=\int_{\R}
\bigl|f(x)\bigr|^{\alpha}dx<\infty,
\]
it is possible to define the stochastic integral
\[
I(f)=\int_{\R}f(x)\mu(dx)
\]
such that $\Real I(f)$ is an S$\alpha$S random variable with scale
parameter $\|f\|_{L^\alpha(\R)}^{\alpha}$. In other words,
\[
\bigl\|\Real I(f) \bigr\|_{\alpha}^{\alpha}=\|f\|_{L_\alpha(\R)}^{\alpha},
\]
that is, the real part of the stochastic integral $I(\cdot)$ maps
$L^\alpha(\R)$ isometrically into some family of S$\alpha$S random variables.

Let now $\mathbf T$ be a parametric set. For a measurable function $f
\colon\mathbf T\times\R\rightarrow\mathbb{C}$ such that $f(t,\cdot)
\in L_{\alpha}(\R)$ for all $t \in\mathbf T$, we may define the random
field $\{Z(t),t \in\mathbf T\}$ by
\begin{equation}
\label{eq2} Z(t) = \Real\int_{\R} f(t,x)\mu(dx).
\end{equation}
This random field $Z(t)$ has the so-called LePage series representation
constructed as follows. Let $\varphi$ be an arbitrary positive
probability density on $\R$,
and let the independent families ${\varGamma_{k},\ k\geq1},\  {\xi_{k},\
k\geq1},\ {g_k,\ k\geq1}$, of random variables satisfy:
\begin{enumerate}
\item${\varGamma_{k},\ k\geq1}$, is a sequence of Poisson arrival times
with unit intensity;
\item${\xi_{k} ,\ k\geq1}$, are independent random variables having
density $\varphi$;
\item${g_k,\ k\geq1}$, are independent complex-valued rotationally
invariant Gaussian\break random variables\footnote{Note that the rotational
invariance implies $\Expect[g_k]=0$.} with $\Expect[|\Real
g_k|^\alpha]=1$.
\end{enumerate}
Then the random field ${Z(t),\ t\in\mathbf T}$, defined by \eqref{eq2}
has the same finite-dimensional distributions as
\begin{equation}
\label{eq3} Z'(t)=C_\alpha\Real\sum
_{k\geq1}\varGamma_k^{-{1}/{\alpha}}\varphi(\xi
_k)^{-{1}/{\alpha}}f(t,\xi_k)g_k,
\end{equation}
where
\[
C_\alpha= \biggl(\frac{\varGamma(2-\alpha)\cos\frac{\pi\alpha}{2}}{1-\alpha
} \biggr)^{1/\alpha};
\]
the series converges almost surely for all $t\in\mathbf T$ (see \cite
[Lemma 1]{kono-maejima} and \cite[Theorem~1.4.2]{samor-taqqu}).

In the rest of our paper, $C$ denotes a generic constant whose value
may change from line to line; $C_{a,b,\dots}$ denotes a constant
depending on $a,b,\dots$.

\subsection{Fractional integration}
We will use the pathwise fractional integration; for more detail, see
\cite{samko,zahle}. Let functions $f,g\colon[a,b]\rightarrow\mathbb
{R}$ be such that, for some $\beta\in(0,1)$, the following fractional
derivatives are defined:
\begin{gather*}
\bigl(D_{a+}^{\beta}f\bigr) (x)=\frac{1}{\varGamma(1-\beta)} \Biggl(
\frac
{f(x)}{(x-a)^\beta}+\beta \int_{a}^x
\frac{f(x)-f(u)}{(x-u)^{1+\beta}}du \Biggr)1_{(a,b)}(x),
\\
\bigl(D_{b-}^{1-\beta}g \bigr) (x)=\frac{1}{\varGamma(\beta)} \Biggl(
\frac
{g(x)}{(b-x)^{1-\beta}}+(1-\beta) \int_{x}^b
\frac{g(x)-g(u)}{(x-u)^{2-\beta}}du \Biggr)1_{(a,b)}(x).
\end{gather*}
Provided that $D_{a+}^{\beta}f\in L^1[a,b]$ and $D_{b-}^{1-\beta
}g_{b-}\in
L^\infty[a,b]$, where $g_{b-}(x) = g(x) - g(b)$,
the fractional
integral is defined as
\begin{equation*}
\int_a^bf(x)dg(x)=\int_a^b
\bigl(D_{a+}^{\beta}f\bigr) (x) \bigl(D_{b-}^{1-\beta}g_{b-}
\bigr) (x)dx.
\end{equation*}
It is worth mentioning that for $f\in C^\nu[a,b]$ and $g\in C^\mu[a,b]$
with $\mu+\nu>1$, the fractional
integral $\int_a^bf(x)dg(x)$ is well defined for any
$\beta\in(1-\nu,\mu)$ and equals the limit of Riemann sums.

\subsection{Estimates of Fourier-type integrals}

The following result specifies the rate of convergence in the
Riemann--Lebesgue lemma. It may be known that, for example,\ for
periodic functions and integer parameter, this is Zygmund's theorem;
however, we failed to find it in the literature. Moreover, a similar
reasoning will be used later in the proof of our main results, so we
found it suitable to present its proof.
\begin{lmm}\label{l1}
Let $f\in C{[a,b]}$ and $h\colon[0,+\infty)\rightarrow[0,+\infty)$ be
a nondecreasing function such that $|f(x)-f(y)|\leq h
(|x-y|)$ for all $x,y \in[a,b]$. Then, for any
nonzero $\lambda\in\R$,
\[
\Biggl|\int_a^bf(x)e^{i\lambda x}dx \Biggr| \leq 3(b-a)h
\bigl(|\lambda|^{-1} \bigr)+ 2|\lambda|^{-1} \sup
_{x\in[a,b]} \bigl|f(x) \bigr|.
\]
\end{lmm}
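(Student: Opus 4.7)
The plan is to partition $[a,b]$ into $N=\lceil(b-a)|\lambda|\rceil$ equal subintervals with endpoints $x_k=a+k(b-a)/N$, so that each mesh width $(b-a)/N$ does not exceed $|\lambda|^{-1}$. On each piece I would split $f$ as $f(x)=f(x_{k-1})+(f(x)-f(x_{k-1}))$, thereby decomposing the full integral into a ``piecewise-constant'' part and a remainder.

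The remainder is controlled directly by the modulus hypothesis: since $|f(x)-f(x_{k-1})|\le h((b-a)/N)\le h(|\lambda|^{-1})$ on $[x_{k-1},x_k]$, summing the length-times-height bound over $k$ yields at most $(b-a)h(|\lambda|^{-1})$. The piecewise-constant part equals $(i\lambda)^{-1}\sum_{k=1}^N f(x_{k-1})(e^{i\lambda x_k}-e^{i\lambda x_{k-1}})$; here I would invoke Abel's summation by parts to move the forward difference off the exponentials and onto $f$. This produces a pair of boundary terms of total absolute value at most $2\sup_{[a,b]}|f|$, plus an interior sum $\sum_{k=1}^{N-1}(f(x_{k-1})-f(x_k))e^{i\lambda x_k}$ whose absolute value is bounded by $(N-1)h(|\lambda|^{-1})$. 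Dividing by $|\lambda|$ and using $N-1\le(b-a)|\lambda|$ then produces the contribution $2|\lambda|^{-1}\sup_{[a,b]}|f|+(b-a)h(|\lambda|^{-1})$.

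Adding the two estimates gives the stated inequality (in fact with leading constant $2$ in place of $3$, so the factor $3$ in the lemma is a comfortable margin). The single non-routine step is the summation by parts: without it, the naive triangle bound for each of the $N$ constant pieces would contribute a factor of $N\sim(b-a)|\lambda|$ that overwhelms the $1/|\lambda|$ gained from integrating $e^{i\lambda x}$. Abel summation extracts the oscillatory cancellation and is the only place in the argument where it matters that $e^{i\lambda x}$ is a unimodular complex exponential rather than an arbitrary bounded function.
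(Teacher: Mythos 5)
Your proof is correct and follows essentially the same route as the paper's: an equipartition of mesh about $|\lambda|^{-1}$, a piecewise-constant approximation whose error is controlled by $h$, and Abel summation by parts to extract the oscillatory cancellation, with boundary terms bounded by $2|\lambda|^{-1}\sup|f|$. Your bookkeeping (taking $N=\lceil(b-a)|\lambda|\rceil$ and using $N-1\le(b-a)|\lambda|$) is slightly tidier than the paper's ($n=[|\lambda|(b-a)]+1$ with $1/|\lambda|\le 2(b-a)/n$), which lets you skip the separate case $|\lambda|\le(b-a)^{-1}$ and gives the constant $2$ in place of $3$.
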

\begin{proof}
If $|\lambda|\leq(b-a)^{-1}$, then
\begin{align*}
\Biggl|\int_a^bf(x)e^{i\lambda x}dx \Biggr| &{}\leq \int
_a^b \bigl|f(x)e^{i\lambda x} \bigr|dx
\\
&{}
\leq \sup_{x\in[a,b]} \bigl|f(x)\bigr|(b-a)
\leq |\lambda|^{-1}\sup_{x\in[a,b]} \bigl|f(x)\bigr|.
\end{align*}
Otherwise, set $n=[|\lambda|(b-a)]+1$, so that
$|\lambda|(b-a)\leq n\leq2|\lambda|(b-a)$.
Consider the equipartition of $[a,b]$ by points
$x_k=a+{(b-a)k}/{n}$,
$k=0,\dots,n$. Then, for any
$|\lambda|\geq(b-a)^{-1}$, the following relations hold:
\begin{align*}
&{}
\Biggl|\int_a^bf(x)e^{i\lambda x}dx \Biggr|
\\
&\quad{}\leq
\Biggl|\sum_{k=1}^{n}\int_{x_{k-1}}^{x_k}f(x_k)e^{i\lambda x}dx \Biggr|+
\Biggl|\sum_{k=1}^{n}\int_{x_{k-1}}^{x_k}\bigl(f(x)-f(x_k) \bigr)e^{i\lambda x}dx \Biggr|
\\
&\quad{}\leq
\Biggl|\sum_{k=1}^{n}f(x_k)\frac{e^{i\lambda x_k}-e^{i\lambda x_{k-1}}}{i\lambda} \Biggr|
+\sum_{k=1}^{n}\int_{x_{k-1}}^{x_k} \bigl|f(x)-f(x_k) \bigr|dx
\\
&\quad{}\leq
\Biggl|\sum_{k=1}^{n}f(x_k)\frac{e^{i\lambda x_k}}{\lambda}-\sum_{k=0}^{n-1}f(x_{k+1})\frac{e^{i\lambda x_k}}{\lambda} \Biggr|
+h \biggl(\frac{b-a}{n} \biggr) (b-a)
\\
&\quad{}\le
\biggl|f(b)\frac{e^{i\lambda b}}{\lambda}-f(a)\frac{e^{i\lambda a}}{\lambda} \biggr|+
\Biggl|\sum_{k=1}^{n-1}\bigl(f(x_k)-f(x_{k+1})\bigr)\frac{e^{i\lambda x_k}}{\lambda} \Biggr|
+h \biggl(\frac{b-a}{n} \biggr) (b-a)&
\\
&\quad{}\leq
\frac{2\sup_{x\in[a,b]}|f(x)|}{|\lambda|}+
\sum_{k=1}^{n-1}\frac{1}{|\lambda|}h \biggl(\frac{b-a}{n} \biggr)
+h \biggl(\frac{b-a}{n}\biggr) (b-a)
\\
&\quad{}\leq
\frac{2\sup_{x\in[a,b]}|f(x)|}{|\lambda|}+
\sum_{k=1}^{n-1}\frac{2(b-a)}{n}h \biggl(\frac{1}{|\lambda|} \biggr)
+h \biggl(\frac{1}{|\lambda|}\biggr) (b-a)
\\
&\quad{}\leq
\frac{2\sup_{x\in[a,b]}|f(x)|}{|\lambda|}+3(b-a)h \biggl(\frac{1}{|\lambda|} \biggr).\qedhere
\end{align*}
\end{proof}

\section{Stochastic heat equation with stable noise and its approximations}

Consider a Cauchy problem for the one-dimensional heat equation
\begin{equation}
\label{eq1} %
\begin{cases}
d_tU(t,x)=\dfrac{1}{2}\dfrac{\partial^2}{\partial x^2}U(t,x)dt+\sigma
(t,x)dZ(t), \quad t>0, x\in\R,\\
U(0,x)=U_0(x).
\end{cases} %
\end{equation}
Here $\sigma(t,x)$ is a bounded function that is jointly H\"older
continuous of order $\gamma\in(1/2,1)$, that is,
\begin{equation}
\label{eqG}
\bigl|\sigma(t_1,x_1)-\sigma(t_2,x_2)\bigr|
\leq C
\bigl(|t_1-t_2|^\gamma+|x_1-x_2|^\gamma\bigr),
\end{equation}
and $U_0$ is a bounded measurable function. The random force in this
equation is a real harmonizable fractional stable process
\[
Z(t)=\Real\int_{\R}\frac{e^{itx}-1}{\llvert x\rrvert ^{1/\alpha+H}} M(dx),
\]
where $M$ is a complex rotationally invariant S$\alpha$S measure $\mu$
on $\R$, defined in Section~2.1, and $H\in(1/2,1)$ is the Hurst
parameter of the process. In what follows, we denote
\[
f(t,x)=\frac{e^{itx}-1}{\llvert x\rrvert ^{1/\alpha+H}}.
\]
It is well known (see, e.g., \cite{samor-taqqu}) that $Z$ is an
$H$-self-similar process having a continuous modification; henceforth,
we assume that $Z$ itself is continuous. Moreover, it is almost surely
pathwise H\"older continuous with any exponent $\gamma\in(0, H)$ (see
\cite{kono-maejima}).

We consider Eq.~\eqref{eq1} in the mild sense. We recall that a mild
solution is given by the variation-of-constants formula
\begin{equation}
\label{eqM} U(t,x)=\int_{\R}\rho(t,x-y)U_0(y)dy+
\int_0^tdZ(s)\int_{\R}\rho
(t-s,x-y)\sigma(s,y)dy,
\end{equation}
where $\rho(t,x)=(4\pi t)^{-{1}/{2}}\exp\{-\frac{|x|^2}{4t}\}$.

\begin{theo}\label{thm:exuniq}
The Cauchy problem \eqref{eq1} has a solution given by \eqref{eqM},
where the integral with respect to $Z$ is understood as a fractional
integral.\vadjust{\eject}
\end{theo}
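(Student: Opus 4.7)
The first term of \eqref{eqM}, namely $\int_\R \rho(t,x-y)U_0(y)\,dy$, is a classical convolution of a bounded measurable function with the integrable Gauss kernel, hence well defined. The essential point is to verify that, for $P$-a.e.\ $\omega$, the pathwise fractional integral
$$
\int_0^t \phi(s)\,dZ(s),\qquad \phi(s):=\int_\R\rho(t-s,x-y)\sigma(s,y)\,dy,
$$
exists in the sense of Section~2.2. Since $Z$ is almost surely H\"older continuous of every exponent $\mu<H$, and the Z\"ahle integral is defined whenever the integrand is $\nu$-H\"older with $\mu+\nu>1$ (one then picks $\beta\in(1-\mu,\nu)$), the task reduces to showing that the deterministic function $\phi$, extended to the endpoint by $\phi(t):=\sigma(t,x)$, is H\"older continuous of sufficiently high order on $[0,t]$.

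\textbf{Regularity of $\phi$.} For $0\le s_1<s_2\le t$, I would exploit the semigroup identity $\rho(t-s_1,x-y)=\int_\R \rho(t-s_2,x-z)\rho(s_2-s_1,z-y)\,dz$ to rewrite
$$
\phi(s_2)-\phi(s_1)=\int_\R \rho(t-s_2,x-z)\bigl[\sigma(s_2,z)-(P_{s_2-s_1}\sigma(s_1,\cdot))(z)\bigr]\,dz,
$$
where $P_u$ denotes the heat semigroup. Splitting the bracket as $[\sigma(s_2,z)-\sigma(s_1,z)]+[\sigma(s_1,z)-(P_{s_2-s_1}\sigma(s_1,\cdot))(z)]$, the first piece is bounded by $C(s_2-s_1)^\gamma$ via \eqref{eqG}, while the second, being a semigroup smoothing error, has modulus at most $C\int_\R \rho(s_2-s_1,w)|w|^\gamma\,dw\le C(s_2-s_1)^{\gamma/2}$ by a standard Gaussian moment bound combined with the spatial H\"older condition on $\sigma$. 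Since $\rho(t-s_2,x-\cdot)$ is a probability density, integrating yields $|\phi(s_2)-\phi(s_1)|\le C(s_2-s_1)^{\gamma/2}$ uniformly on $[0,t]$, so $\phi\in C^{\gamma/2}[0,t]$.

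\textbf{Conclusion and main obstacle.} Choosing $\nu$ slightly below $\gamma/2$ and $\mu$ slightly below $H$ so that $\mu+\nu>1$, together with $\beta\in(1-\mu,\nu)$, one checks directly that $D_{0+}^{\beta}\phi\in L^1[0,t]$ (from boundedness and $\nu$-H\"older regularity of $\phi$ with $\nu>\beta$) and $D_{t-}^{1-\beta}Z_{t-}\in L^\infty[0,t]$ almost surely (from $\mu$-H\"older regularity of $Z$ with $1-\beta<\mu$); the Z\"ahle integral in \eqref{eqM} is therefore defined pathwise. The principal technical difficulty is the regularity of $\phi$ at $s=t$, where $\rho(t-s,\cdot)$ concentrates into a Dirac mass; the semigroup decomposition together with the Gaussian moment $\int_\R|w|^\gamma\rho(u,w)\,dw\le Cu^{\gamma/2}$ is what converts this near-singular behavior into the quantitative H\"older estimate needed for the pathwise fractional integration theory to apply.
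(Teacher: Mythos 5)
Your semigroup computation is correct as far as it goes: using $\rho(t-s_1,\cdot)=\rho(t-s_2,\cdot)*\rho(s_2-s_1,\cdot)$ and the splitting into a time-increment term and a smoothing-error term does give the uniform bound $|\phi(s_2)-\phi(s_1)|\le C(s_2-s_1)^{\gamma/2}$ on all of $[0,t]$, and the exponent $\gamma/2$ is in fact sharp for a uniform modulus (the concentration of $\rho(t-s,\cdot)$ near $s=t$ really does halve the exponent). The gap is in the conclusion: you need $\nu+\mu>1$ with $\nu<\gamma/2$ and $\mu<H$, i.e.\ $H+\gamma/2>1$, and this is \emph{not} implied by the standing assumptions $\gamma\in(1/2,1)$, $H\in(1/2,1)$. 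For instance $H=\gamma=0.6$ gives $H+\gamma/2=0.9<1$, so no admissible $\beta\in(1-\mu,\nu)$ exists and your argument does not produce the integral, although the theorem is asserted (and true) for these parameters. Equivalently, your final step "choose $\beta<\nu<\gamma/2$ and $1-\beta<\mu<H$" is vacuous unless $1-H<\gamma/2$.

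The paper avoids this by not insisting on a uniform H\"older pairing. It keeps the full exponent $\gamma$ in the increment and isolates the loss of regularity in an integrable endpoint weight: writing $f(s)=\int_{\R}\rho(1,z)\sigma(s,x+z\sqrt{t-s})\,dz$ and using \eqref{eqG} twice, one gets $|f(s)-f(u)|\le C\bigl((s-u)^\gamma+(t-u)^{-\gamma/2}(s-u)^\gamma\bigr)$ for $u<s<t$. With $\beta\in(1-H,\gamma)$ (nonempty because $1-H<1/2<\gamma$) this weighted estimate yields
\begin{equation*}
\int_0^t\int_0^s\frac{|f(s)-f(u)|}{(s-u)^{1+\beta}}\,du\,ds
\le C\int_0^t (t-s)^{-\gamma/2}s^{\gamma-\beta}\,ds<\infty,
\end{equation*}
so $D^{\beta}_{0+}f\in L^1[0,t]$, which together with $D^{1-\beta}_{t-}Z_{t-}\in L^\infty[0,t]$ (H\"older continuity of $Z$ up to order $H$) is exactly the Z\"ahle condition under which the fractional integral exists. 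In short: you must use the $L^1$/$L^\infty$ form of the fractional-integral criterion with a $\gamma$-H\"older estimate carrying an integrable singular factor at $s=t$, rather than the H\"older--H\"older criterion with the degraded uniform exponent $\gamma/2$. Your semigroup decomposition could still be used to derive such a weighted estimate (keep the smoothing error in the form $C(s_2-s_1)^\gamma(t-s_2)^{-\gamma/2}$ instead of relaxing it to $(s_2-s_1)^{\gamma/2}$), but as written the proposal proves the theorem only under the extra hypothesis $H+\gamma/2>1$.
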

\begin{proof}
Take some $\beta\in(1-H,\gamma)$. Fix $t>0$ and $x\in\mathbb{R}^d$
and denote $f(s) = \int_{\R}\rho(t-s,x-y)\sigma(s,y)dy$, $s\in[0,t]$.
In view of the H\"older continuity of $Z$, the fractional derivative
$D^{1-\beta}_{t-}Z_{t-}$ is almost surely bounded on $[0,t]$. So in
order to prove the claim, we only need to show that $D^\beta_{0+}f$ is
integrable on $[0,t]$. To this end, write
\begin{gather*}
\int_0^t \bigl|D^\beta_{0+}f(s)
\bigr|ds \le C\int_0^t \Biggl(\frac{|f(s)|}{s^\beta} +
\int_0^s \frac{|f(s) - f(u)|}{(s-u)^{1+\beta}}du \Biggr)ds.
\end{gather*}
Since $\sigma$ is bounded, so is $f$, supplying the finiteness of the
first integral. To establish that of the second one, use the change of
variable $y = x + z\sqrt{t-s}$ and note that $\rho(t,x) = \rho(1,x/\sqrt
{t})/\sqrt{t}$ to represent $f$ as
\[
f(s) = \int_{\R}\rho(1,z)\sigma(s,x + z\sqrt{t-s})dz.
\]
Therefore, for any $u<s<t$,
\begin{align*}
\bigl|f(s) - f(u) \bigr| &{}\leq \int_{\R}\rho(1,z)
\bigl|\sigma(s,x+z
\sqrt{t-s})-\sigma(s,x+z\sqrt{t-u}) \bigr|dz
\\
&\quad{}+ \int_{\R}\rho(1,z) \bigl|\sigma(s,x+z\sqrt{t-u})-
\sigma(u,x+z\sqrt{t-u}) \bigr|dz
\\
&{} =:I_1+I_2.
\end{align*}
Thanks to \eqref{eqG}, $I_2\le C(s-u)^\gamma$ and
\begin{align*}
I_1 &{}\leq \int_{\R}\rho(1,z)|z\sqrt{t-s}-z
\sqrt{t-u}|^\gamma dz \leq C |\sqrt{t-s}-\sqrt{t-u}|^\gamma
\\
&{}= C \biggl(\frac{s-u}{\sqrt{t-s}+\sqrt{t-u}} \biggr)^\gamma \leq C
(t-u)^{-\gamma/2}(s-u)^\gamma.
\end{align*}
Consequently,
\begin{align*}
\int_0^t \int_0^s
\frac{|f(s) - f(u)|}{(s-u)^{1+\beta}}du\,ds &{}\le C \int_0^t \int
_0^s (t-u)^{-\gamma/2}(s-u)^{\gamma-1-\beta}du
\,ds
\\
&{}\le C \int_0^t (t-s)^{-\gamma/2}
s^{\gamma-\beta}ds<\infty,
\end{align*}
which concludes the proof.
\end{proof}

The process $Z$ is a particular example of a random field given by
\eqref{eq2}. In view of this, we assume that $Z$ is given by its LePage
series representation \eqref{eq3} corresponding to the density
\[
\varphi(x)=K_{\eta}|x|^{-1} \bigl|\log|x|+1 \bigr|^{-1-\eta},
\]
where $\eta$ is some positive number, and $K_{\eta}=(\int_{\R}
|x|^{-1}|\ln|x|+1|^{-1-\eta}dx)^{-1}$ is a normalizing constant.

To simplify the following reasoning, we assume that
\[
(\varOmega, \mathcal F, \Prob)= (\varOmega_{\varGamma}\otimes \varOmega_{\xi}
\otimes\varOmega_{g}, \mathcal F_{\varGamma} \otimes\mathcal
F_{\xi} \otimes\mathcal F_{g}, \Prob_{\varGamma}\otimes
\Prob_{\xi} \otimes\Prob_{g})
\]
and
\[
\varGamma_{k}(\omega)=\varGamma_{k}(\omega_{\varGamma}),\
\xi_{k}(\omega)=\xi _{k}(\omega_{\xi}),\
g_{k}(\omega)=g_{k}(\omega_{g})
\]
for all $\omega= (\omega_{\varGamma},\omega_{\xi}, \omega_{g})\in\varOmega,\ k\geq1$.

Let us consider the approximation of the process $Z$ by partial sums of
its LePage series \eqref{eq3}. Specifically, define
\[
Z_N(t)=C_\alpha\Real\sum_{k=1}^N
\varGamma_k^{-1/\alpha}\varphi(\xi _k)^{-1/\alpha}f(t,
\xi_k)g_k.
\]
The following result establishes a uniform, in $N$, H\"older continuity
of the family $\{Z^N,N\ge1\}$.
\begin{tv}\label{tv1}
For any $\theta\in(0,H)$ and $T>0$, there is an almost surely finite
random variable $C=C_{\theta,T}(\omega)$ such that, for all $ N\geq1$
and $t,s \in[0,T]$, we have the following inequality:
\[
\bigl|Z_N(t)-Z_N(s) \bigr|\leq C_\theta(\omega) |t-s
|^\theta.
\]
\end{tv}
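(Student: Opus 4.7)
The strategy is to condition on $\mathcal{G} = \sigma(\varGamma_k, \xi_k : k \ge 1)$ and exploit that, given $\mathcal{G}$, the increment $Z_N(t) - Z_N(s)$ is a centered Gaussian random variable (the $g_k$'s are rotationally invariant Gaussians) with conditional variance
\[
\sigma_N(t,s)^{2} = c \sum_{k=1}^{N} \varGamma_k^{-2/\alpha} \varphi(\xi_k)^{-2/\alpha} \bigl|f(t,\xi_k) - f(s,\xi_k)\bigr|^{2} \le \sigma_\infty(t,s)^{2}.
\]
A Laplace-transform computation via the Poisson-random-measure representation of $(\varGamma_k, \xi_k)$ identifies $\sigma_\infty(t,s)^{2}$ as a positive $(\alpha/2)$-stable variable with scale proportional to $\|f(t,\cdot) - f(s,\cdot)\|_{L^\alpha(\R)}^{2}$; the change of variable $y = (t-s)x$ gives $\|f(t,\cdot) - f(s,\cdot)\|_{L^\alpha(\R)}^\alpha = K|t-s|^{\alpha H}$, so $\Expect[\sigma_\infty(t,s)^{q}] = C_q |t-s|^{qH}$ for every $q \in (0, \alpha)$.

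Conditional Gaussianity gives $\Expect[|Z_N(t) - Z_N(s)|^q \mid \mathcal{G}] = C_q \sigma_N^q$, and Doob's $L^q$ maximal inequality applied to the $\mathcal{G}$-conditional martingale $N \mapsto Z_N(t) - Z_N(s)$ then yields
\[
\Expect\Bigl[\sup_{N \ge 1} \bigl|Z_N(t) - Z_N(s)\bigr|^{q}\Bigr] \le C_q |t-s|^{qH} \qquad \text{for every } q < \alpha.
\]
The random quantity $V(t,s) := \sup_N |Z_N(t) - Z_N(s)|$ is a pseudo-metric on $[0,T]$ (symmetric, vanishing on the diagonal, satisfying the triangle inequality), so a standard Kolmogorov--Chentsov chaining on a countable dense subset of $[0,T]$ produces an a.s.\ finite Hölder constant. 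The smoothness of each $Z_N$ in $t$ (being a finite sum of trigonometric terms) extends the bound from the dense set to all of $[0,T]$.

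The main obstacle is that the moment chaining above only gives Hölder exponents $\theta < H - 1/\alpha$, strictly less than $H$: since $\sigma_\infty^2$ is $(\alpha/2)$-stable, only moments of $Z_N(t) - Z_N(s)$ of order $< \alpha$ are finite, capping the admissible $q$ in Kolmogorov--Chentsov below $\alpha$. To reach any $\theta < H$ (as claimed) one has to exploit the conditional Gaussian structure more delicately than bare moment chaining allows --- for instance by dyadically grouping the frequencies $|\xi_k| \in [2^j, 2^{j+1})$, applying conditional sub-Gaussian concentration band by band (in the spirit of Kono--Maejima), and chaining these band-wise estimates via a telescoping argument that uses the Fourier-type regularity of $(e^{it\xi} - e^{is\xi})/|\xi|^{1/\alpha + H}$ in $t$.
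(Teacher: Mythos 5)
Your construction is sound up to and including the unconditional moment bound $\Expect[\sup_N|Z_N(t)-Z_N(s)|^q]\le C_q|t-s|^{qH}$, $q<\alpha$ (the conditional Gaussianity, the identification of $\sigma_\infty^2$ as positive $(\alpha/2)$-stable with scale $\propto\|f(t,\cdot)-f(s,\cdot)\|_{L^\alpha(\R)}^2$, and Doob's inequality for $1<q<\alpha$ are all fine). But at that point the proof stops short of the statement: Kolmogorov--Chentsov chaining on these moments gives H\"older exponents only up to $H-1/\alpha$ (and nothing at all when $\alpha\le 1/H$, which is possible since $\alpha\in(1,2)$ and $H\in(1/2,1)$), whereas the proposition asserts every $\theta<H$. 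You recognize this yourself, and your final paragraph names the right remedy --- exploit the conditional Gaussian structure in the spirit of K\^ono--Maejima --- but only as a sketch (``for instance by dyadically grouping the frequencies \dots''), with none of the band-wise concentration or telescoping actually carried out. Since obtaining all $\theta<H$ is precisely the content of the proposition, this is a genuine gap, not a cosmetic omission: the part you prove is strictly weaker, and the part that would close it is not proved.

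The paper's proof closes exactly this gap by never integrating out $(\varGamma_k,\xi_k)$. It treats $N\mapsto Z_N(t)-Z_N(s)$ as a martingale on $(\varOmega_g,\mathcal F_g,\Prob_g)$ for fixed $(\omega_\varGamma,\omega_\xi)$, applies Doob and Fatou to dominate $\Expect_g[\sup_N(Z_N(t)-Z_N(s))^2]$ by $C\,\Expect_g[(Z(t)-Z(s))^2]$, and then runs the K\^ono--Maejima Theorem~1 argument \emph{conditionally}, producing the bound $C_T(\omega)\,|t-s|^{2H}\bigl|\log|t-s|+1\bigr|^{a}$ where $C_T(\omega)$ is merely an almost surely finite random variable (depending on $\omega_\varGamma,\omega_\xi$), not a deterministic constant. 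Because the heavy-tailed quantity is absorbed into this random constant rather than into a moment, the exponent is capped only by $H$, and the pathwise bound $\sup_N|Z_N(t)-Z_N(s)|\le C_{\theta,T}(\omega)|t-s|^\theta$ follows for every $\theta\in(0,H)$. In short: the conditioning must be kept all the way to the end, which is the step your proposal gestures at but does not execute.
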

\begin{proof}
For fixed $\omega_{\varGamma} \in\varOmega_{\varGamma}$, $\omega_{\xi} \in
\varOmega_{\xi}$, and $ t>0$, the sequence
\[
\bigl\{Z_N(t)=Z_N \bigl(t, (\omega_{\varGamma},
\omega_{\xi}, \omega{g}) \bigr), N\geq1 \bigr\}
\]
is a martingale on $(\varOmega_{g}, F_{g}, \Prob_{g})$.
Then, for any $N_{0}\geq1$ and $t,s>0$, the Doob inequality yields
\[
\Expect_{g} \Bigl[ \sup_{1\leq N\leq N_{0}}
\bigl(Z_N(t)-Z_N(s) \bigr)^2 \Bigr] \leq C
\Expect_{g} \bigl[ \bigl(Z_{N_0}(t)-Z_{N_0}(s)
\bigr)^2 \bigr].
\]
Letting $N_0 \rightarrow\infty$ and applying the Fatou lemma, we get
\[
\Expect_{g} \Bigl[\sup_{N\geq1} \bigl(Z_N(t)-Z_N(s)
\bigr)^2 \Bigr]\leq C\Expect_{g} \bigl[ \bigl(Z(t)-Z(s)
\bigr)^2 \bigr].
\]
Using further a reasoning similar to that used in \cite[Theorem
1]{kono-maejima}, we get, for any $t,s\in[0,T]$,
\[
\Expect_{g} \Bigl[\sup_{N\geq1} \bigl(Z_N(t)-Z_N(s)
\bigr)^2 \Bigr]\leq C_T(\omega)\llvert t-s\rrvert
^{2H}\bigl\llvert \log\llvert t-s\rrvert +1\bigr\rrvert ^a
\]
with some $a>0$.
Consequently, for any $\theta\in(0,H)$,
\[
\sup_{N\geq1}\bigl\llvert Z_N(t)-Z_N(s)
\bigr\rrvert \leq C_{\theta,T}(\omega)\llvert t-s\rrvert ^\theta,
\]
as required.
\end{proof}
Taking into account that the processes $Z^N$ approximate $Z$, it is
natural to consider corresponding approximations of a mild solution to
\eqref{eq1}:
\begin{gather*}
U_N(t,x)=\int_{\R}\rho(t,x-y)U_0(y)dy+
\int_0^tdZ_N(s)\int
_{\R}\rho (t-s,x-y)\sigma(s,y)dy.
\end{gather*}
The following theorem is the main result of this paper.
\begin{theo}\label{t1}
For any $t \ge0$ and $x \in\R$, we have the convergence
\[
U_{N}(t,x)\rightarrow U(t,x),\ N\rightarrow\infty,
\]
almost surely.
\end{theo}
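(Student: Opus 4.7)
The plan is to reduce the asserted convergence to an application of dominated convergence for the pathwise fractional integral, using Proposition~\ref{tv1} as the fundamental source of uniform estimates. First, since the deterministic term in both $U$ and $U_N$ coincides, the task reduces to showing
\[
\int_0^t F(s)\,dZ_N(s) \longrightarrow \int_0^t F(s)\,dZ(s)\quad\text{a.s.},
\]
where $F(s)=\int_{\R}\rho(t-s,x-y)\sigma(s,y)dy$. Both integrals are interpreted as fractional integrals. I would fix $\beta\in(1-H,\gamma)$ and $\theta\in(0,H)$ with $\theta+\beta>1$; such a pair exists because $\beta>1-H$ forces $1-\beta<H$. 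Using the definition from Section~2.2, the difference equals
\[
\int_0^t \bigl(D^\beta_{0+}F\bigr)(s)\cdot D^{1-\beta}_{t-}(Z_N-Z)_{t-}(s)\,ds,
\]
with $D^\beta_{0+}F\in L^1[0,t]$ already established inside the proof of Theorem~\ref{thm:exuniq}.

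Next, I would show that $Z_N\to Z$ uniformly on $[0,t]$ almost surely. Almost sure pointwise convergence of the LePage series \eqref{eq3} holds on a countable dense subset of $[0,t]$ by countable intersection of full-measure events; combined with the uniform-in-$N$ H\"older equicontinuity of $\{Z_N\}$ supplied by Proposition~\ref{tv1} and the continuity of $Z$, this extends to uniform convergence via a standard Arzel\`a--Ascoli-type argument.

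The final step is to pass to the limit inside the integral. The H\"older bound $|Z_N(s)-Z_N(u)|\leq C(\omega)|s-u|^\theta$ gives
\[
\bigl|D^{1-\beta}_{t-}(Z_N)_{t-}(s)\bigr|\leq C(\omega)(t-s)^{\theta+\beta-1},
\]
which is bounded on $[0,t]$ since $\theta+\beta-1>0$, while the kernel appearing in the defining formula of $D^{1-\beta}_{t-}$ is dominated by the integrable function $C(\omega)(u-s)^{\theta+\beta-2}$. Uniform convergence $Z_N\to Z$ combined with dominated convergence thus yields the pointwise convergence $D^{1-\beta}_{t-}(Z_N)_{t-}(s)\to D^{1-\beta}_{t-}Z_{t-}(s)$ on $[0,t)$; a second application of dominated convergence---with dominating function $C(\omega)\bigl|D^\beta_{0+}F(s)\bigr|\in L^1[0,t]$---then concludes the proof.

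The main obstacle I anticipate is the parameter juggling: the choice of $\beta$ and $\theta$ must simultaneously keep $D^\beta_{0+}F$ in $L^1[0,t]$ (requiring $\beta<\gamma$), make $D^{1-\beta}_{t-}Z_{t-}$ well-defined on the H\"older paths of $Z$ (requiring $\beta>1-H$), and enforce integrability $\theta+\beta>1$ of the fractional-derivative kernel. Once these three windows are reconciled---which is possible precisely because $\gamma>1/2$ and $H>1/2$---the remaining manipulations are routine, with Proposition~\ref{tv1} doing all of the essential work.
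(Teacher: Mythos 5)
Your argument is correct, and its core coincides with the concluding part of the paper's proof: both reduce the problem to the convergence of the fractional integrals $\int_0^t F\,dZ_N\to\int_0^t F\,dZ$, both rest on the pointwise a.s.\ convergence of the LePage partial sums together with the uniform-in-$N$ H\"older bound of Proposition~\ref{tv1}, and both reuse the integrability of $D^\beta_{0+}F$ established in the proof of Theorem~\ref{thm:exuniq}. The differences are worth noting. First, where you deduce uniform convergence of $Z_N$ on $[0,t]$ from equicontinuity plus convergence on a dense set and then pass to the limit in the fractional derivative by two applications of dominated convergence (with dominating kernel $C(\omega)(u-s)^{\theta+\beta-2}$ and dominating function $C(\omega)|D^\beta_{0+}F|$), the paper upgrades to convergence in the H\"older space $C^\theta[0,T]$ via precompactness and a subsequence argument, and then uses the resulting bound $\sup_s|D^{1-\beta}_{t-}(Z_N-Z)_{t-}(s)|\le C\|Z_N-Z\|_{C^\theta}$; your variant is marginally more elementary and equally valid, since the constant in Proposition~\ref{tv1} does not depend on $N$. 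Second, and more substantially, the paper spends the bulk of its proof on a first step that you bypass entirely: a direct proof that the series $\sum_k v_k(t,x)$ of LePage terms converges a.s., obtained by conditioning on $(\varGamma,\xi)$, estimating the oscillatory integrals $\int_0^t\tau(s)e^{is\xi_k}ds$ in the spirit of Lemma~\ref{l1} to get $\Expect_{\xi,g}[|v_k(t,x)|^2]\le C_k\varGamma_k^{-2/\alpha}$, and invoking Kolmogorov's theorem for sums of independent Gaussians. For the bare statement of Theorem~\ref{t1} this step is logically dispensable, because the identification argument (yours, or the paper's second step) already yields $U_N(t,x)\to U(t,x)$ directly; what the paper's first step buys is explicit quantitative control of the variances of the individual terms, which your softer compactness-and-domination route does not provide. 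Your parameter bookkeeping ($1-H<\beta<\gamma$, $\theta\in(1-\beta,H)$) is exactly the admissible window, so no gap there.
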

\begin{proof}Fix arbitrary numbers $t>0$ and $x\in\R$. Let us define
the functions $v_k(t,x)$ corresponding to the terms of LePage series:
\[
v_k(t,x)=C_\alpha\varGamma_k^{-1/\alpha}
\varphi(\xi_k)^{-1/\alpha}\Real \int_0^t
d_sf(s,\xi_k)\int_{\R}\rho(t-s,x-y)
\sigma(s,y)dy.
\]
Then
\[
U_N(t,x)=\int_{\R}\rho(t,x-y)U_0(y)dy+
\sum_{k=1}^Nv_k(t,x).
\]
As the first step of our proof, we establish the almost sure
convergence of the series $\sum_{k=1}^\infty v_k(t,x)$ for all $t \in
[0,T]$ and $ x \in\R$.

Let us transform the differential
\begin{gather*}
d_sf(s,\xi_k)=d_s \biggl(
\frac{e^{is\xi_k}-1}{\llvert \xi_k\rrvert ^{1/\alpha+H}} \biggr)
=\frac{e^{is\xi_k}\cdot i\xi_k}{\llvert \xi
_k\rrvert ^{1/\alpha+H}}dt= \frac{ie^{is\xi_k} \sign\xi_k}{\llvert \xi
_k\rrvert ^{1/{\alpha}+H-1}}dt.
\end{gather*}
Then we have
\begin{align*}
v_k(t,x)&{}=C_\alpha\varGamma_k^{-1/\alpha} \Real
\Biggl[ i\frac{\varphi(\xi
_k)^{-1/\alpha}\sign\xi_k}{\llvert \xi_k\rrvert ^{1/\alpha+H-1}}g_k
\\
&\quad{}\times
\int_0^t\int_{\R}
\rho(t-s,x-y)\sigma(s,y)dy\,e^{is\xi_k}ds \Biggr].
\end{align*}
Hence,
\begin{align*}
\Expect_{g} \bigl[\bigl\llvert v_k(t,x)\bigr\rrvert
^2 \bigr]&{}\ge C_\alpha^2\varGamma
_k^{-2/\alpha}\frac{\varphi(\xi_k)^{-2/\alpha}}{\llvert \xi_k\rrvert ^{2/\alpha+2H-2}}
\\
&\quad{}\times\Biggl\llvert \int_0^t\int
_{\R} \rho(t-s,x-y)\sigma(s,y)dy\,e^{is\xi
_k}ds\Biggr
\rrvert ^2.
\end{align*}
Let us estimate the last integral:
\begin{align*}
&{}\Biggl\llvert \int_0^t\int_{\R}\rho(t-s,x-y)\sigma(s,y)dy\,e^{is\xi_k}ds\Biggr\rrvert\\
&\quad{}\leq
\Biggl\llvert \int_0^t\int_{\R}\rho(t-s,x-y)\sigma(s,x)dy\,e^{is\xi_k}ds\Biggr\rrvert
\\
&\qquad{}+
\Biggl\llvert \int_0^t\int_{\R}\rho(t-s,x-y) \bigl(\sigma(s,y)-\sigma(s,x) \bigr)dy\,e^{is\xi_k}ds\Biggr\rrvert
=:I_1 + I_2.
\end{align*}
Since $\int_{\R}\rho(t-s,x-y)dy=1$, we have
\[
I_1 = \Biggl\llvert \int_0^t\int
_{\R}\rho(t-s,x-y)\sigma(s,x)dy\,e^{is\xi
_k}ds\Biggr
\rrvert =\Biggl\llvert \int_0^t
\sigma(s,x)e^{is\xi_k}ds\Biggr\rrvert .\vadjust{\eject}
\]
First, assume that $|\xi_k|\ge t^{-1}$. Using Lemma \ref{l1}, the last
expression admits the following estimate:
\[
\Biggl\llvert \int_0^t\sigma(s,x)e^{is\xi_k}ds
\Biggr\rrvert \leq3t C\llvert \xi_k\rrvert ^{-\gamma}+2\llvert
\xi_k\rrvert ^{-1}\sup_{s \in[0,t]}\bigl\llvert
\sigma (s,x) \bigr\rrvert \le C\llvert \xi_k\rrvert ^{-\gamma}.
\]
Let us now estimate $I_2$, taking into account that $\rho(t,x) = \rho
(1,x/\sqrt{t})/\sqrt{t}$:
\begin{align*}
&{}\Biggl\llvert \int_0^t\int_{\R}\rho(t-s,x-y) \bigl(\sigma(s,y)-\sigma(s,x)\bigr)dye^{is\xi_k}ds\Biggr\rrvert
\\[-1.5pt]
&\quad{}=
\Biggl\llvert \int_0^t\int_{\R}\rho\biggl(1,\frac{x-y}{\sqrt{t-s}}\biggr) \bigl(\sigma(s,y)-\sigma(s,x)\bigr)\frac{dy}{\sqrt{t-s}}e^{is\xi_k}ds\Biggr\rrvert
\\[-1.5pt]
&\quad{}=
\Biggl\llvert\int_0^t\int_{\R}\rho(1,z)\bigl(\sigma(s,x+z\sqrt{t-s})-\sigma (s,x)\bigr)dze^{is\xi_k}ds\Biggr\rrvert
\\[-1.5pt]
&\quad{}=\Biggl\llvert \int_0^t\tau(s)e^{is\xi_k}\Biggr\rrvert =:I_3,
\end{align*}
where
\[
\tau(s)=\int_{\R}\rho(1,z) \bigl(\sigma(s,x+z\sqrt{t-s})-
\sigma(s,x)\bigr)dz.
\]
We further estimate
\begin{align*}
\bigl\llvert \tau(s_1)-\tau(s_2)\bigr\rrvert &{}=
\Biggl| \int_{\R}\rho(1,z) \bigl(\sigma (s_1,x+z\sqrt{t-s_1})-\sigma(s_1,x) \bigr)
\\[-2.5pt]
&\quad{}- \bigl(\sigma(s_2,x+z\sqrt{t-s_2})-\sigma(s_2,x) \bigr)dz \Biggr|
\\[-2.5pt]
&{}\leq
\biggl\llvert \int_{\R}\rho(1,z)
\bigl(\sigma(s_1,x+z\sqrt{t-s_1})-\sigma
(s_1,x+z\sqrt{t-s_2}) \bigr)dz\biggr\rrvert
\\[-1.5pt]
&\quad{}+\biggl\llvert \int_{\R}\rho(1,z) \bigl(\sigma(s_1,x+z\sqrt{t-s_2})-\sigma (s_2,x+z\sqrt{t-s_2}) \bigr)dz\biggr\rrvert
\\[-1.5pt]
&\quad{}+\int_{\R}\rho(1,z)\bigl\llvert \sigma(s_1,x)-\sigma(s_2,x)\bigr\rrvert dz
=:J_1+J_2+J_3.
\end{align*}
Thanks to the H\"older continuity \eqref{eqG}, \ $J_3\leq
C(s_1-s_2)^\gamma$ for $s_2<s_1$.
Further, similarly to the proof of Theorem \ref{thm:exuniq},
$J_1\leq C (s_1-s_2)^\gamma(t-s_s)^{-\gamma/2}$ and
$J_2\leq C (s_1-s_2 )^\gamma$.
Consequently, for $s_2<s_1$,
\[
\bigl\llvert \tau(s_1)-\tau(s_2)\bigr\rrvert \leq C
(s_1-s_2 )^\gamma (t-s_2
)^{-\gamma/2}.
\]
Similarly to $J_1$, $\llvert \tau(s)\rrvert \leq C (t-s)^{\gamma/2}$.

Further, recall that $|\xi_k|\ge t^{-1}$. As in the proof of Lemma \ref
{l1}, set $n= [\llvert \xi_k\rrvert  t ]+1$, so that $\llvert \xi
_k\rrvert t\leq n\leq2\llvert \xi_k\rrvert t$, and define the equidistant
partition of $[0,t]$: $t_j={tj}/{n}, j=0,\dots,n$. Then
\begin{align*}
I_3&{}=\Biggl\llvert \sum_{j=1}^{n}\int_{t_{j-1}}^{t_j}\tau(s)e^{is\xi_k}ds\Biggr\rrvert
\\
&{}\leq\Biggl\llvert \sum_{j=1}^{n}\int_{t_{j-1}}^{t_j}\tau(t_{j-1})e^{is\xi_k}ds\Biggr\rrvert +\sum_{j=1}^{n}\int_{t_{j-1}}^{t_j}\bigl\llvert \tau(s)-\tau(t_{j-1})\bigr\rrvert ds
\\
&{}\leq\Biggl\llvert \sum_{j=1}^{n}\tau(t_{j-1}) \biggl(\frac{e^{it_{j}\xi_k}}{\xi_k}-\frac{e^{it_{j-1}\xi_k}}{\xi_k} \biggr)\Biggr\rrvert +C\sum_{j=1}^{n}\int_{t_{j-1}}^{t_j} (t-t_{j-1} )^{-\gamma/2}(s-t_{j-1} )^\gamma ds
\\
&{}\leq\biggl\llvert \frac{\tau(0)}{\xi_k}\biggr\rrvert +\biggl\llvert\frac{\tau(t)}{\xi_{k}}\biggr\rrvert +\Biggl\llvert \sum_{j=1}^{n}\frac{e^{it_{j}\xi_k}}{\xi_k} \bigl(\tau (t_j)-\tau(t_{j-1}) \bigr)\Biggr\rrvert
\\
&\quad{}+ C\sum_{j=1}^{n} (t-t_{j-1})^{-\gamma/2} (t_j-t_{j-1} )^\gamma\frac{t}{n}
\\
&{}\leq C \Biggl(\llvert \xi_k\rrvert ^{-1}+\llvert\xi_k\rrvert ^{-1}\sum_{j=1}^{n}\biggl(\frac{t}{n} \biggr)^\gamma+ \biggl(\frac{t}{n}\biggr)^\gamma \Biggr) \leq C\llvert \xi_k\rrvert^{-1} \bigl(1+n^{1-\gamma} \bigr)
\\
&{}\leq C\llvert \xi_k\rrvert ^{-1} \bigl(1+\llvert
\xi_k\rrvert ^{1-\gamma} \bigr)\le C\llvert \xi_k \rrvert ^{-\gamma}.
\end{align*}
Otherwise, if $\llvert \xi_k\rrvert \leq t^{-1}$, then
\[
I_3\leq\Biggl\llvert \int_0^t
\tau(s)e^{is\xi_k}ds\Biggr\rrvert \leq\int_0^t
\bigl\llvert \tau (s)\bigr\rrvert ds\leq C.
\]
Then, for $\llvert \xi_k\rrvert \geq t^{-1}$,
\begin{align*}
\Expect_{g} \bigl[\bigl\llvert v_k(t,x)\bigr\rrvert
^2 \bigr]
&{}\le C_\alpha^2\varGamma
_k^{-2/\alpha}\frac{\varphi(\xi_k)^{-2/\alpha}}{\llvert \xi_k\rrvert ^{2/\alpha+2H-2}}\Biggl\llvert \int
_0^t\int_{\R}\rho(t-s,x-y)
\sigma (s,y)dy\,e^{is\xi_k}ds\Biggr\rrvert ^2
\\
&{}\leq C\varGamma_k^{-2/\alpha}\llvert \xi_k\rrvert
^{2-2H-2\gamma}\bigl\llvert \ln \llvert \xi_k\rrvert +1\bigr\rrvert
^{2(1+\eta)/\alpha},
\end{align*}
whereas, for $\llvert \xi_k\rrvert < t^{-1}$,
\[
\Expect_{g} \bigl[\bigl\llvert v_k(t,x)\bigr\rrvert
^2 \bigr]\leq C\varGamma _k^{-2/\alpha}\llvert
\xi_k\rrvert ^{2-2H}\bigl\llvert \ln\llvert
\xi_k\rrvert +1\bigr\rrvert ^{2(1+\eta)/\alpha}.
\]
Hence,
\begin{align*}
\Expect_{\xi, g} \bigl[\bigl\llvert v_k(t,x)\bigr\rrvert
^2 \bigr]
&{}\leq C\varGamma _k^{-2/\alpha} \biggl[\int
_{\llvert y\rrvert \geq t^{-1}}\llvert y\rrvert ^{2-2H-2\gamma} \bigl(\bigl\llvert \ln
\llvert y\rrvert \bigr\rrvert +1 \bigr)^{2(1+\eta
)/\alpha}\varphi(y)dy
\\
&\quad{}+\int_{\llvert y\rrvert <t^{-1}}\llvert y\rrvert ^{2-2H} \bigl(\bigl
\llvert \ln\llvert y\rrvert \bigr\rrvert +1 \bigr)^{2(1+\eta)/\alpha}\varphi(y)dy
\biggr]
\\
&{}=C \varGamma_k^{-2/\alpha} \biggl[\int_{\llvert y\rrvert \geq t^{-1}}
\llvert y\rrvert ^{1-2H-2\gamma} \bigl(\bigl\llvert \ln\llvert y\rrvert \bigr
\rrvert +1 \bigr)^{(-1+2/\alpha)(1+\eta)}dy
\\
&\quad{}+\int_{\llvert y\rrvert <t^{-1}}\llvert y\rrvert ^{1-2H} \bigl(\bigl
\llvert \ln\llvert y\rrvert \bigr\rrvert +1 \bigr)^{(-1+2/\alpha
)(1+\eta)}dy \biggr].
\end{align*}
The first integral converges since $1-2H-2\gamma<-1$, whereas the
second one converges since $1-2H>-1$. Therefore,
\[
\sum\limits
_{k=1}^{\infty}\Expect_{\xi, g} \bigl[
\bigl\llvert v_k(t,x)\bigr\rrvert ^2 \bigr]\leq\sum
_{k=1}^{\infty}C_{k}
\varGamma_k^{-2/\alpha}.
\]

By the strong law of large numbers, $\varGamma_k\sim\frac{1}{k},\
k\rightarrow+\infty$, $\Prob_\varGamma$-almost surely. Therefore, $\sum_{k=1}^{\infty}\Expect_{\xi, g}  [\llvert v_k(t,x)\rrvert ^2
]<\infty$ \  $\Prob_\varGamma$-almost surely.

In particular, \rule{0pt}{10pt}$\sum_{k=1}^{\infty}\Expect_{g}  [\llvert v_k(t,x)\rrvert ^2  ]$ converges $\Prob_\xi\otimes\Prob_\varGamma
$-almost surely. For fixed $\omega_\xi\in\varOmega_\xi$ and $ \omega_\varGamma
\in\varOmega_\varGamma$, the random variables $ \{v_k(t,x), k\geq1
\}$ are independent centered Gaussian random variables; moreover,
\[
\sum\limits
_{k=1}^{\infty}\Expect \bigl[\bigl\llvert
v_k(t,x)\bigr\rrvert ^2 \bigr]<\infty.
\]
Then, by the Kolmogorov theorem, $\sum_{k=1}^{\infty}v_k(t,x)$
converges $\Prob_\xi\otimes\Prob_\varGamma\otimes\Prob_g$-almost
surely, as claimed.

It remains to prove that the sum $U_0(t,x) + \sum_{k=1}^{\infty
}v_k(t,x)$ is equal to $U(t,x)$. We first show that $Z_N(t)\rightarrow
Z(t),\ N\rightarrow\infty$, almost surely in $C^\theta[0,T]$ for any
$T>0$, $\theta\in(0,H)$. Taking into account that, for any $t \in
[0,T]$, $Z_N(t)\rightarrow Z(t), N\rightarrow\infty$, almost surely, we
get that there is a set $\varOmega_0 \subset\varOmega$ such that $\mathsf
{P}(\varOmega_0) = 1$ and $Z_N(t)\rightarrow Z(t),\ N\rightarrow\infty$,
for any $t \in[0,T]\cap\mathbb Q$, $\omega\in\varOmega_0$. Thanks to
Proposition \ref{tv1}, the sequence $ \{Z_N, N\geq1 \}$ is
almost surely bounded in $C^\theta[0,T]$ for any $\theta\in(0,H)$
and $T>0$. Therefore, it is precompact in each of these spaces. Take
arbitrary $\theta\in(0,H)$. Without loss of generality, the sequence
$ \{Z_N, N\geq1 \}$ is precompact in $C^\theta[0,T]$ for any
$\omega\in\varOmega_0$. Fix $\omega\in\varOmega_0$ and let $\{
Z_{N_k},k\ge1\}$ be any subsequence of $\{Z_N,N\ge1\}$. In view of
precompactness, it must contain a subsequence convergent in $C^\theta$;
to avoid cumbersome notation, we assume that $Z_{N_k} \to Y$, $k\to
\infty$. In particular, $Z_{N_k}(t)\to Y(t)$, $k\to\infty$, $t \in
[0,T]\cap\mathbb Q$. In view of continuity, $Z(t) = Y(t)$ for any
$t\in[0,T]$. Since any subsequence of $\{Z_N,N\ge1\}$ contains a
subsequence convergent to $Z$ in $C^\theta[0,T]$, the sequence itself
converges to $Z$.

Now, thanks to the integrability of the fractional derivative of $f(s)
= \int_{\R}\rho(t-s,x-y)\sigma(s,y)dy$, which was shown in the proof of
Theorem \ref{thm:exuniq}, the convergence established in the previous
paragraph yields
\[
\int_0^tdZ_N(s)\int
_{\R}\rho(t-s,x-y)\sigma(s,y)dy \rightarrow\int
_0^tdZ(s)\int_{\R}
\rho(t-s,x-y)\sigma(s,y)dy
\]
as $n\rightarrow\infty$, concluding the proof.
\end{proof}

\begin{remark}
It is possible to consider \eqref{eq1} with $Z$ being a real
harmonizable multifractional stable motion considered in \cite
{dozzi-shev}. Making some minor changes, one can show that
Theorem~\ref{t1} is valid in this case as well.
\end{remark}

\section*{Acknowledgements}

The authors thank an anonymous referee for his careful reading of the
manuscript and valuable remarks, which led to a significant improvement of the article.

%

\end{document}